\newtheorem{theorem}{Theorem}
\newtheorem{proposition}[theorem]{Proposition}
\newtheorem{corollary}[theorem]{Corollary}
\theoremstyle{definition}
\theoremstyle{remark}
\newtheorem{remark}[theorem]{Remark}
\numberwithin{equation}{section}
\newcommand{\Prob}{\mathrm{Prob}}
\newcommand{\iDes}{\mathrm{iDes}}
\newcommand{\SEP}{\mathrm{SEP}}
\newcommand{\linf}{\ell_{\infty}}
\newcommand{\id}{\mathrm{id}}
\newcommand{\rev}{\mathrm{rev}}
\newcommand{\Sn}{\mathfrak{S}_n}
\newcommand{\sgn}{\mathrm{sgn}}
\begin{document}

\title{Riffle shuffles with biased cuts}

\author[Assaf]{Sami Assaf}
\address{Berkeley Quantitative, 140 Sherman St, Fairfield, CT 06824}
\email{sassaf@math.mit.edu}

\author[Diaconis]{Persi Diaconis}
\address{Department of Statistics, Stanford University, 390 Serra Mall, Stanford, CA 94305-4065}

\author[Soundararajan]{K. Soundararajan}
\address{Department of Mathematics, Stanford University, 450 Serra Mall, Building 380, Stanford, CA 94305-2125}
\email{ksound@math.stanford.edu}

\subjclass[2000]{Primary 60B15; Secondary 60C05}

\date{\today}


\keywords{card shuffling, biased cuts, quasisymmetric functions}

\begin{abstract}
  The well-known Gilbert-Shannon-Reeds model for riffle shuffles
  assumes that the cards are initially cut `about in half' and then
  riffled together. We analyze a natural variant where the initial cut
  is biased. Extending results of Fulman (1998), we show a sharp
  cutoff in separation and L-infinity distances. This analysis is
  possible due to the close connection between shuffling and
  quasisymmetric functions along with some complex analysis of a
  generating function.
\end{abstract}

\maketitle

\section{Introduction}
\label{sec:introduction}

We analyze a natural one-parameter model for riffle shuffling a deck
of $n$ cards. Roughly, the deck is cut into two piles with a binomial
$(n,\theta)$ distribution. Then the piles are riffled together
sequentially according to the following rule: if the left pile has $A$
cards and the right pile has $B$ cards, then drop the next card from
the bottom of the left pile with probability $A/(A+B)$. Continue until
all cards are dropped. Starting at the identity, let $P_{\theta}(w)$
be the probability of the permutation $w$ after one such
$\theta$-shuffle. Define convolution by 
\begin{equation}
  P_{\theta}^{*k}(w) = \sum_{v} P_{\theta}(v) P_{\theta}^{*(k-1)}(v^{-1}w),
\end{equation}
and define the uniform distribution by $U(w) = 1/n!$.

When $\theta = 1/2$, this is the widely studied Gilbert-Shannon-Reeds
model. The natural version with biased cuts was studied by
\cite{DFP1992}, \cite{Lalley1996,Lalley2000} and most thoroughly by
\cite{Fulman1998}. A literature review is in
Section~\ref{sec:shuffling} below. Here we study the rate of
convergence in the separation and $\linf$ metrics:
\begin{eqnarray}
  \SEP(k) & = & \max_{w} \left( 1 - \frac{P^{*k}(w)}{U(w)} \right) \\
  \linf(k) & = & \max_{w} \left| 1 - \frac{P^{*k}(w)}{U(w)} \right| .
  \label{e:metrics_linf}
\end{eqnarray}
Note that $\SEP(k)$ is bounded above by $1$, and $\linf(k)$ can be as
large as $n!-1$. Further, both $\SEP(k)$ and $\linf(k)$ are upper
bounds for the total variation metric:
\begin{displaymath}
  \| P^{*k} - U \|_{TV} = \frac{1}{2} \sum_{w} |P^{*k}(w) - U(w)| 
  \leq \SEP(k) \leq \linf(k).
\end{displaymath}

A main result of this note gives closed form expressions
\begin{eqnarray}
  \SEP(k) & = & 1 - \sum_{w \in \Sn} \sgn(w) \prod_{i=1}^{n} \left(
    \theta^i + (1-\theta)^i \right)^{kn_i(w)} 
  \label{e:sep_closed}\\
  \linf(k) & = & \sum_{w \in \Sn} \prod_{i=1}^{n} \left(
    \theta^i + (1-\theta)^i \right)^{kn_i(w)} - 1,
  \label{e:linf_closed}
\end{eqnarray}
where $n_i(w)$ is the number of $i$-cycles in the permutation
$w$. Using these formulae we prove the following.

\begin{theorem}
  For the $\theta$-biased riffle shuffle measure on $\Sn$, let 
  \begin{equation}
    k = \Big \lfloor\frac{2\log n -\log 2 + c}{-\log (\theta^2
      +(1-\theta)^2)} \Big\rfloor.
    \label{e:k}
  \end{equation}
  Then
  \begin{eqnarray}
    \SEP(k) & \sim & \exp(e^{-c})-1 \\
    \linf(k) & \sim & 1-\exp(-e^{-c})
  \end{eqnarray}
  for any fixed real $c$ as $n$ tends to $\infty$. Here $0 < \theta <
  1$ is fixed.
  \label{thm:main}
\end{theorem}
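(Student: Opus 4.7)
The plan is to analyze the two cycle-weighted sums appearing in the closed forms for $\SEP(k)$ and $\linf(k)$ via the exponential formula, and to extract their asymptotics. Setting $\alpha_i := (\theta^i+(1-\theta)^i)^k$ and $\beta := \theta^2+(1-\theta)^2 < 1$, so $\alpha_1 = 1$ and $\alpha_2 = \beta^k$, the exponential formula for cycle index polynomials gives
\begin{align*}
L_n \;:=\; \sum_{w \in \Sn}\prod_i \alpha_i^{n_i(w)} &\;=\; n!\,[t^n]\exp\!\Bigl(\sum_{i\geq 1}\tfrac{\alpha_i t^i}{i}\Bigr),\\
\tilde L_n \;:=\; \sum_{w \in \Sn}\sgn(w)\prod_i \alpha_i^{n_i(w)} &\;=\; n!\,[t^n]\exp\!\Bigl(\sum_{i\geq 1}\tfrac{(-1)^{i-1}\alpha_i t^i}{i}\Bigr),
\end{align*}
so that $\linf(k) = L_n - 1$ and $\SEP(k) = 1 - \tilde L_n$. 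The choice of $k$ in the theorem ensures $n^2\beta^k \to 2e^{-c}$ as $n \to \infty$.

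First I would isolate the ``involution part'' of the exponent by retaining only the $i = 1$ and $i = 2$ terms. Setting
\[
J_n^{\pm} \;:=\; n!\,[t^n]\exp\!\bigl(t \pm \tfrac{1}{2}\beta^k t^2\bigr) \;=\; \sum_{j=0}^{\lfloor n/2\rfloor} \binom{n}{2j}(2j-1)!!\,(\pm\beta^k)^j,
\]
each summand is dominated uniformly in $n$ by $(n^2\beta^k/2)^j/j!$, which converges term-by-term to $(e^{-c})^j/j!$ and is summable. Dominated convergence then yields $J_n^+ \to e^{e^{-c}}$ and $J_n^- \to e^{-e^{-c}}$.

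Next I would bound the contribution of cycles of length $\geq 3$. The key ingredient is the power-mean inequality: for $\theta \in (0,1)$, the $\ell^i$-norm $\rho_i := (\theta^i+(1-\theta)^i)^{1/i}$ is strictly decreasing in $i$, so $\rho_i < \rho_2 = \sqrt{\beta}$ for $i \geq 3$. Since $n(\sqrt\beta)^k \to \sqrt{2e^{-c}}$ while $(\rho_i/\sqrt\beta)^k$ is a negative power of $n$, we have $n\rho_i^k \leq n\rho_3^k \to 0$. Stratifying the sum defining $L_n$ by the multiset $(m_3, m_4, \ldots)$ of cycle lengths $\geq 3$, setting $J := \sum_{i\geq 3} i\, m_i$, and using $n!/(n-J)! \leq n^J$, one obtains
\[
|L_n - J_n^+|,\ |\tilde L_n - J_n^-| \;\leq\; \bigl(\max_{m\leq n} J_m^+\bigr)\Bigl(\exp\!\Bigl(\sum_{i\geq 3}\tfrac{n^i\alpha_i}{i}\Bigr) - 1\Bigr).
\]
The bound $n^i\alpha_i = (n\rho_i^k)^i \leq (n\rho_3^k)^i$ reduces the tail to a geometric series in $n\rho_3^k \to 0$, so this error tends to zero. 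Combining with the involution asymptotics identifies the two limits.

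The main obstacle is quantifying the decay of $n\rho_3^k$: one must translate the strict inequality $\rho_3 < \sqrt\beta$ into a genuine negative power of $n$ under the explicit choice of $k$ in the theorem, so that the full tail $\sum_{i\geq 3} n^i\alpha_i/i$ is uniformly $o(1)$. Once this quantitative version of the power-mean gap is established, the remaining work is standard asymptotics of weighted involutions.
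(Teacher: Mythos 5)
Your proposal is correct, but it takes a genuinely different route from the paper. The paper extracts $[z^n]$ from $\exp(F_k(z))$, $F_k(z)=\sum_j z^j\theta_j^k/j$, by a saddle-point argument: a contour integral over $|z|=n$ split into a major arc $|x|\le(\log n)/\sqrt{n}$ and a minor arc, plus Stirling's formula, yielding the uniform estimate $\ell(k,n)=\exp\bigl(\sum_{j\ge2}n^j\theta_j^k/j\bigr)\bigl(1+O((1+M)/\sqrt{n})\bigr)$ whenever $M=\sum_{j\ge2}n^j\theta_j^k\le\sqrt{n}/(10\log n)$ (Theorem~\ref{thm:sound}); the stated asymptotics then follow because for fixed $\theta$ the exponent is dominated by its $j=2$ term. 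You instead split off the involution factor $\exp(t\pm\tfrac12\beta^kt^2)$, compute its coefficients exactly as weighted involution numbers, pass to the limit by dominated convergence, and kill the cycles of length $\ge3$ via the power-mean inequality. The ``obstacle'' you flag at the end is in fact immediate for fixed $\theta$: since $k=\frac{2\log n+O(1)}{-\log\beta}$, one has $(\rho_3/\sqrt{\beta})^k=n^{-\delta(1+o(1))}$ with $\delta=2\log(\sqrt{\beta}/\rho_3)/(-\log\beta)>0$, so the geometric tail $\sum_{i\ge3}(n\rho_3^k)^i$ is $O(n^{-3\delta+o(1)})$. Your elementary argument fully suffices for the theorem as stated and has the merit of treating the signed sum (hence $\SEP$) explicitly, which the paper dispatches with a ``similarly''; what it gives up is the quantitative error term and the uniformity in $\theta$ that the paper exploits in its extensions to $\theta$ depending on $n$ (e.g.\ $(1-\theta)\log n$ bounded, or $\theta=1-1/n$), where the $j\ge3$ terms genuinely contribute to the limit and your tail bound would no longer show they vanish. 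One final remark: your conclusion ($\linf(k)\sim\exp(e^{-c})-1$ and $\SEP(k)\sim1-\exp(-e^{-c})$) is the correct one; the displayed theorem transposes the two right-hand sides (note $\SEP\le1$ while $\exp(e^{-c})-1$ can exceed $1$), and the discussion in Section~\ref{sec:main} agrees with your assignment.
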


An upper bound on separation of this form is given in
\cite{Fulman1998}. Theorem~\ref{thm:main} shows this bound is tight,
holds also for $\linf$, and establishes the cutoff phenomenon. Note
that, as a function of $\theta$, $k$ as defined in \eqref{e:k} above
is smallest when $\theta = 1/2$, so unbiased cuts lead to fastest
mixing.

Background on Markov chains and shuffling is given in
Section~\ref{sec:shuffling}. There is an intimate connection between
these biased shuffles and quasisymmetric functions explained in
Section~\ref{sec:quasisymmetric} where we prove \eqref{e:sep_closed}
and \eqref{e:linf_closed}. The upper bound in \cite{Fulman1998} is
derived using a strong stationary time. This is shown to be exact and
equivalent to \eqref{e:sep_closed} in
Section~\ref{sec:stationary}. The proof of Theorem~\ref{thm:main},
which has extensions to allow $\theta$ to depend on $n$ (e.g. $\theta
= 1/n$), is in Section~\ref{sec:main}.

\section{Riffle Shuffling}
\label{sec:shuffling}

A superb introduction to Markov chains which treats riffle shuffling
and stationary times is the book by \cite{LPW2009}. The analysis of
riffle shuffling has connections to algebra, geometry and
combinatorics; a detailed survey is in \cite{Diaconis2003}. The
results and references in \cite{ADS2011} and \cite{CH2010} bring this
up to date.

For present purposes, the following extension is needed. Let $1 \leq a
\leq \infty$, and let $\boldsymbol{\theta} = (\theta_1, \theta_2,
\ldots, \theta_a)$, with $0 \leq \theta_i \leq 1$ and $\theta_1 +
\cdots + \theta_a = 1$, be fixed. A $\boldsymbol{\theta}$-shuffle of a
deck of $n$ cards proceeds as follows: Choose $\{N_i\}_{i=1}^{a}$ from
the multinomial$(n,\boldsymbol{\theta})$ distribution, that is, with
the distribution of $n$ balls being dropped into $a$ boxes
independently according to $\boldsymbol{\theta}$. Cut the deck into
$a$ packets of sizes $N_1,N_2, \ldots, N_a$ (some of the packets may
be empty). Now sequentially drop cards from the bottom of each packet,
choosing to drop from pack $i$ with probability proportional to its
current packet size. Continue until all cards have been dropped into a
single pile. Let $P_{\boldsymbol{\theta}}$ denote the associated
measure on $\Sn$. Note that several more detailed descriptions of
$P_{\boldsymbol{\theta}}$ appear in \cite{Fulman1998}.

When $a=2$ and $\theta_1 = \theta_2 = 1/2$, this is the basic
Gilbert-Shannon-Reeds measure. When $a = 2$ and $\theta_1 = \theta$,
$\theta_2 = 1-\theta$, this is the $\theta$-biased shuffle studied in
the present paper. The measures $P_{\boldsymbol{\theta}}$ were studied
by \cite{DFP1992} who prove that they convolve nicely: if
$\boldsymbol{\theta} = (\theta_1,\ldots,\theta_a)$ and
$\boldsymbol{\eta} = (\eta_1, \ldots, \eta_b)$, then set
$\boldsymbol{\theta} * \boldsymbol{\eta} = (\theta_1\eta_1, \ldots,
\theta_1\eta_b, \theta_2\eta_1, \ldots, \theta_a\eta_b)$, a vector of
length $ab$.

\begin{proposition}[\cite{DFP1992}]
  On $\Sn$, we have
  \begin{displaymath}
    P_{\boldsymbol{\theta}} * P_{\boldsymbol{\eta}} = P_{\boldsymbol{\theta}*\boldsymbol{\eta}}.
  \end{displaymath}
  \label{prop:convolve}
\end{proposition}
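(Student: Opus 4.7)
My plan is to prove the proposition via an equivalent ``i.i.d.-label and stable sort'' description of $P_{\boldsymbol{\theta}}$, after which the convolution identity reduces to an elementary independence argument on pair-labels. First I would establish the description: independently assign each card $c\in\{1,\ldots,n\}$ a label $L_c\in\{1,\ldots,a\}$ with $\Pr(L_c=i)=\theta_i$, and let $\sigma_L\in\Sn$ be the stable-sort permutation sending card $c$ to the rank of the pair $(L_c,c)$ in lex order. Then $\sigma_L\sim P_{\boldsymbol{\theta}}$: condition on the label counts $N_i=\#\{c:L_c=i\}$, which are multinomial$(n,\boldsymbol{\theta})$; conditionally, $\sigma_L$ is uniform over the $\binom{n}{N_1,\ldots,N_a}$ interleavings of $a$ packets of sizes $N_1,\ldots,N_a$, which is exactly what the sequential-drop construction produces (each complete drop sequence has probability $\prod_i N_i!/n!$).

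With this in place, realize independent $V\sim P_{\boldsymbol{\theta}}$ and $U\sim P_{\boldsymbol{\eta}}$ via independent labels $L\sim\boldsymbol{\theta}$ and $M\sim\boldsymbol{\eta}$, so $V=\sigma_L$ and $U=\sigma_M$. Doing first the $\boldsymbol{\theta}$-shuffle and then the $\boldsymbol{\eta}$-shuffle, the card originally at position $c$ ends up at position $\sigma_M(\sigma_L(c))$; a short calculation shows that this is the rank of the triple $(M_{\sigma_L(c)},L_c,c)$ in lex order, i.e., the stable sort of the cards by the pair key $Z_c:=(L_c,M_{\sigma_L(c)})$ under lexicographic order. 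The crucial independence claim is that the $Z_c$ are i.i.d.\ across $c$ with $\Pr(Z_c=(i,k))=\theta_i\eta_k$: marginally $L_c\sim\boldsymbol{\theta}$, and conditional on $L$ (hence on $\sigma_L$), the values $M_{\sigma_L(c)}$ are an i.i.d.\ $\boldsymbol{\eta}$-sample across $c$ because $M$ is i.i.d.\ across positions and $\sigma_L$ is a bijection; since $L$ and $M$ are independent, the joint distribution of the $Z_c$ follows.

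Therefore the composite shuffle is an i.i.d.-label stable sort at parameter $\boldsymbol{\theta}*\boldsymbol{\eta}$, and the label description of the first paragraph identifies its law as $P_{\boldsymbol{\theta}*\boldsymbol{\eta}}$, matching $(P_{\boldsymbol{\theta}}*P_{\boldsymbol{\eta}})(w)=\Pr(VU=w)$ under the paper's convolution convention. The main technical step is the independence of the $Z_c$; once that is in hand, the rest is matching the lexicographic enumeration of pair-labels $(L_c,M_{\sigma_L(c)})\in\{1,\ldots,a\}\times\{1,\ldots,b\}$ with the coordinates of $\boldsymbol{\theta}*\boldsymbol{\eta}=(\theta_1\eta_1,\ldots,\theta_a\eta_b)$ and verifying compatibility between group-algebra multiplication and the order of shuffle composition -- routine but requiring care.
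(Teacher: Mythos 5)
The paper offers no proof of this proposition --- it is quoted from \cite{DFP1992} --- so the comparison is against the standard argument. Your proof is exactly that standard argument (the digit-labelling construction of Diaconis--Fill--Pitman and Bayer--Diaconis), and it is the same mechanism the paper itself uses in Section~\ref{sec:stationary}, where each card carries a vector of i.i.d.\ labels and repeated inverse shuffling sorts the vectors. Your first paragraph is correct: conditional on the counts, both the stable sort and the sequential-drop riffle are uniform over the same set of $\binom{n}{N_1,\ldots,N_a}$ permutations (those whose inverse is increasing on each block), and the drop-rule probability $\prod_i N_i!/n!$ is right; the independence of the pair keys $Z_c$ is also correct.

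The one genuine problem is an order inconsistency in the composition step, and here order is not cosmetic: since $Q_D$ is only quasisymmetric, $P_{\boldsymbol{\theta}*\boldsymbol{\eta}}\neq P_{\boldsymbol{\eta}*\boldsymbol{\theta}}$ in general (already for $n=3$). A stable sort by $M$ performed \emph{after} a stable sort by $L$ orders the cards by $(M_{\sigma_L(c)},\sigma_L(c))=(M_{\sigma_L(c)},L_c,c)$: as in radix sort, the key applied last is the most significant. Your own triple has $M$ first, yet you then declare the pair key to be $(L_c,M_{\sigma_L(c)})$ with $L$ primary. The correct key is $(M_{\sigma_L(c)},L_c)$, whose lexicographic enumeration matches $\boldsymbol{\eta}*\boldsymbol{\theta}$, so $\sigma_M\circ\sigma_L\sim P_{\boldsymbol{\eta}*\boldsymbol{\theta}}$. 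Correspondingly, since $\sigma_M$ is the outermost factor and $\sigma_M\overset{d}{=}U\sim P_{\boldsymbol{\eta}}$, $\sigma_L\overset{d}{=}V\sim P_{\boldsymbol{\theta}}$ are independent, the product $\sigma_M\circ\sigma_L$ has the law of $UV$, i.e.\ of $P_{\boldsymbol{\eta}}*P_{\boldsymbol{\theta}}$ under the paper's convention that the left factor of the convolution is the first shuffle. So what your construction actually establishes is $P_{\boldsymbol{\eta}}*P_{\boldsymbol{\theta}}=P_{\boldsymbol{\eta}*\boldsymbol{\theta}}$, which is the proposition after renaming; the result does follow, but as written your two intermediate identifications are each off by an order reversal, and the proof only closes because the reversals cancel. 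Fix it by taking $(M_{\sigma_L(c)},L_c)$ as the pair key and stating the conclusion for $P_{\boldsymbol{\eta}}*P_{\boldsymbol{\theta}}$ (or, equivalently, by letting the $\boldsymbol{\eta}$-sort come first).
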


Thus $P_{\boldsymbol{\theta}}^{*k} = P_{\boldsymbol{\theta}^{*k}}$,
and the combinatorics of $P_{\boldsymbol{\theta}}$ determines the
convolution powers. \cite{Fulman1998} works out many properties of
these measures giving closed formulae and asymptotics for the
distribution of cycle structure, inversions and descents.

When $\theta = 1/2$, a sharp analysis of the rate of convergence for
the Gilbert-Shannon-Reeds measure in total variation distance appears
in \cite{BD1992}. It is an open problem to give a similarly sharp
analysis for the measures $P_{\boldsymbol{\theta}}^{*k}$.

\subsection*{Hyperplane walks}

Our $\theta$-shuffles may be studied from other points of view as
well. They are a special case of hyperplane walks introduced in
\cite{BHR1999} and further studied in \cite{BD1998} and more recently
in \cite{AD2010} and \cite{DPR}. Further, they fall into the class of
``Hopf-square'' walks studied in \cite{DPR}. Each of these
perspectives adds to our picture. A brief commentary follows.

The braid arrangement is based on the $\binom{n}{2}$ hyperplanes
$H_{i,j} = \{x \in \mathbb{R}^n \ | \ x_i = x_j\}$, $1\leq i<j \leq
n$. This divides $\mathbb{R}^n$ into chambers and faces. As shown in
\cite{BHR1999}, the chambers are indexed by permutations and the faces
are indexed by block ordered set partitions. There is a simple
projection operator which, given a chamber $C$ and a face $F$, returns
the chamber $C * F$ that is adjacent to $F$ and closest to $C$ (in the
sense of crossing the fewest number of hyperplanes). Details are in
\cite{BHR1999,BD1998}. It is shown there that projection operates as a
kind of inverse riffle shuffle. Put a probability measure on faces of
form $S,S^{c}$, with $S \subset [n]$, giving probability
$\theta^{|S|}(1-\theta)^{n-|S|}$ to each ($S$ may be empty). The
resulting hyperplane walk may be explained as follows: Picture a deck
of $n$ cards in order. For each card, flip an independent
$\theta$-coin. Remove all cards where the coin comes up heads, keeping
their relative order fixed, and move them to the top of the deck. This
is precisely an inverse $\theta$-shuffle.

The theory of \cite{BHR1999,BD1998} gives useful expressions for the
eigenvalues of any hyperplane walk. Specialized to $\theta$-shuffles,
they show there is one eigenvalue $\beta_w$ for each permutation $w
\in \Sn$. Further, \cite{DPR} gives a description of the left eigen
vectors. These give right eigen vectors and values of the ``forward''
$\theta$-shuffles.

As one example, \cite{BD1998} gives a rate of convergence after
$k$-steps. In the present case, this reads
\begin{equation}
  \| K^{k} - U \|_{TV} \leq \sum_{1 \leq i < j \leq n} \beta_{i,j}^{k}
\end{equation}
with $\beta_{i,j} = \sum_{F \subseteq H_{i,j}} w(F)$. By symmetry,
$\beta_{i,j} = \beta_{1,2}$ is constant in $i,j$. The sum is over all
set partitions $S,S^{c}$ where either $\{1,2\} \subseteq S$ or
$\{1,2\} \subseteq S^{c}$. So $\{1,2\}$ contributes $\sum_{A \subseteq
  [n-1]} \theta^{2} \theta^{|A|} (1-\theta)^{n-2-|A|} = p^2$, the
compliment contributes $(1-\theta)^2$, and so $\beta_{i,j} = \theta^2
+ (1-\theta)^2$. The bound above becomes
\begin{equation}
  \| K^{k} - U \|_{TV} \leq \binom{n}{2} \left( \theta^2 +
    (1-\theta)^2 \right)^{k}.
\end{equation}
This is exactly the birthday bound derived differently below. Of
course, these are just upper bounds, and it is of interest to know if
they can be improved. The theory developed below shows that
\begin{equation}
  \| K^{k} - U \|_{TV} \leq \SEP(k) \leq \binom{n}{2} \left( \theta^2
    + (1-\theta)^2 \right)^{k}.
\label{e:bound}
\end{equation}
for fixed $\theta$ in $(0,1)$. Theorem~\ref{thm:main} shows that
$\SEP(k) \sim \binom{n}{2} \left( \theta^2 + (1-\theta)^2
\right)^{k}$, so the bound is best possible.

Recall that any $w \in \Sn$ has a unique factorization as a product of
decreasing Lyndon words: $w = \ell_1 \ell_2 \cdots \ell_k$. Here
$\ell_i$ is Lyndon if it is lexicographically least among all cyclic
rearrangements (so $132$ is Lyndon but $213$ is not). For example
$236415 = 236 \cdot 4 \cdot 15$. The theorem in \cite{DPR} shows
\begin{equation}
  \beta_w = \prod_{i=1}^{k} \left( \theta^{|\ell_i|} +
    (1-\theta)^{|\ell_i|} \right) 
\end{equation}
where $|\ell_i|$ is the length of the Lyndon word $\ell_i$. If $w$ is
the reverse of the identity, then all $|\ell_i|=1$ and $\beta_{w}=1$. The
second eigenvalue is $\theta^2 + (1-\theta)^2$ with multiplicity
$\binom{n}{2}$, so the bound \eqref{e:bound} uses precisely these
eigen values. More generally, the eigen values are $\prod_{i=1}^{n}
\left(\theta^i + (1-\theta)^i \right)^{a_i}$ for any $0 \leq a_i \leq n$
with $\sum i a_i = n$, each with multiplicity $n!/(\prod_i i^{a_i}
a_i!)$.

\section{Quasisymmetric Functions}
\label{sec:quasisymmetric}

Background on symmetric function theory is in \cite{Macdonald1995}
with \cite{ECII} developing the extension to quasisymmetric
functions. We work with infinitely many variables $X =
\{x_i\}_{i=1}^{\infty}$. The space of quasisymmetric functions
homogeneous of degree $n$ has dimension $2^{n-1}$. A basis for this
space is indexed by subsets of $[n-1] = \{1,2,\ldots,n-1\}$ or,
equivalently, by compositions of $n$. We use the following bijection
between subsets $D = \{D_1 < D_2 < \cdots < D_{a-1}\}$ of $[n-1]$ and
compositions $\alpha = (\alpha_1,\alpha_2,\ldots,\alpha_a)$ of $n$ to
identify subsets and compositions, which we denote by $\alpha
\leftrightarrow D(\alpha)$:
\begin{eqnarray*}
  (\alpha_1, \alpha_2, \ldots, \alpha_{a}) 
  & \longmapsto &
  \{\alpha_1,\alpha_1+\alpha_2,\ldots,\alpha_1+\cdots+\alpha_{a-1}\},
  \\
  (D_1, D_2-D_1, \ldots, n-D_{a-1})
  & \longmapsfrom &
  \{D_1 < D_2 < \ldots < D_{a-1}\}.
\end{eqnarray*}

The \emph{monomial} quasisymmetric function basis is defined by
\begin{equation}
  M_{\alpha}(X) = \sum_{i_1 < i_2 < \cdots < i_{a}}
  x_{i_1}^{\alpha_1} x_{i_2}^{\alpha_2} \cdots x_{i_a}^{\alpha_a}.
  \label{e:monomial}
\end{equation}
For example, $M_{(1,2,1)}(X) = \sum_{i_1 < i_2 < i_3} x_{i_1}
x_{i_2}^{2} x_{i_3}$. 

The \emph{fundamental} quasisymmetric function basis of
\cite{Gessel1984} is defined by
\begin{equation}
  Q_{D}(X) = \sum_{\substack{i_1 \leq \cdots \leq i_n \\ i_j = i_{j+1} \Rightarrow j \not\in D}} 
  x_{i_1} \cdots x_{i_n} .
  \label{e:fundamental}
\end{equation}
For example, for $n=4$, $Q_{\{1\}}(X) = \sum_{i_1 < i_2 \leq i_3 \leq
  i_4} x_{i_1} x_{i_2} x_{i_3} x_{i_4}$. Expressed in terms of
monomial quasisymmetric functions, $Q_{\{1\}}(X) = M_{(1,3)}(X) +
M_{(1,2,1)}(X) + M_{(1,1,2)}(X) + M_{(1,1,1,1)}(X)$. In general, the
fundamental basis is related to the monomial basis by
\begin{equation}
  Q_{D(\beta)}(X) = \sum_{\alpha \ \mathrm{refines} \ \beta} M_{\alpha}(X),
  \label{e:F2M}
\end{equation}
where a composition $\alpha$ of length $a$ refines the composition
$\beta$ of length $b$ if there exist indices $0=i_0, i_1, i_2, \ldots,
i_b=a$ such that $\alpha_{i_{j-1}+1} + \cdots + \alpha_{i_j} =
\beta_{j}$. For example, both $(1,2,1)$ and $(1,1,2)$ refine $(1,3)$
but $(2,1,1)$ does not.

\cite{Stanley2001}, based on results in \cite{Fulman1998}, established
a sharp connection between $\boldsymbol{\theta}$-shuffling and
quasisymmetric functions.

\begin{theorem}[\cite{Stanley2001}(Theorem 2.1)] Let $w \in \Sn$ and
  $\boldsymbol{\theta} = (\theta_1,\theta_2,\ldots,\theta_a)$ be given. Then
  \begin{displaymath}
    P_{\boldsymbol{\theta}}(w) = Q_{\iDes(w)}(\boldsymbol{\theta}),
  \end{displaymath}
  where $\iDes(w) = \mathrm{Des}(w^{-1})$ is the inverse descent set
  of $w$.
  \label{thm:stanley}
\end{theorem}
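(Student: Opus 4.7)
My plan is to compute $P_{\boldsymbol{\theta}}(w)$ by conditioning on the multinomial cut, characterize the valid interleavings via the inverse descent set, and identify the resulting sum as $Q_{\iDes(w)}(\boldsymbol{\theta})$ through~\eqref{e:F2M}. Fix a weak composition $\boldsymbol{N}=(N_1,\dots,N_a)$ of $n$. A telescoping of the sequential drop probabilities shows that, conditional on $\boldsymbol{N}$, each of the $\binom{n}{N_1,\dots,N_a}$ valid interleavings is equally likely, with probability $\prod_i N_i!/n!$. Since packet $i$ consists of the consecutive cards $R_i=\{N_1+\cdots+N_{i-1}+1,\dots,N_1+\cdots+N_i\}$, which retain their relative increasing order in the output deck, a permutation $w$ arises from cut $\boldsymbol{N}$ if and only if $w^{-1}$ is increasing on each $R_i$, equivalently $\iDes(w)\subseteq D(\boldsymbol{N})$. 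Multiplying by the multinomial cut probability and summing, the multinomial coefficients cancel:
\begin{displaymath}
P_{\boldsymbol{\theta}}(w) \;=\; \sum_{\substack{\boldsymbol{N}\\ \iDes(w)\subseteq D(\boldsymbol{N})}} \theta_1^{N_1}\cdots\theta_a^{N_a}.
\end{displaymath}

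To recognize this as $Q_{\iDes(w)}(\boldsymbol{\theta})$, I group the weak compositions $\boldsymbol{N}$ by the strict composition $\alpha$ obtained from them by discarding zero parts. Each such $\boldsymbol{N}$ is then encoded by a strictly increasing choice of indices $1\le i_1<\cdots<i_{\ell(\alpha)}\le a$ marking the positions of the positive parts, and $\theta_1^{N_1}\cdots\theta_a^{N_a}=\theta_{i_1}^{\alpha_1}\cdots\theta_{i_{\ell(\alpha)}}^{\alpha_{\ell(\alpha)}}$. Summing over these indices yields $M_\alpha(\boldsymbol{\theta})$ by~\eqref{e:monomial}, while the constraint $\iDes(w)\subseteq D(\boldsymbol{N})=D(\alpha)$ is precisely the statement that $\alpha$ refines the composition of $n$ corresponding to $\iDes(w)$. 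Invoking~\eqref{e:F2M} then gives $P_{\boldsymbol{\theta}}(w)=Q_{\iDes(w)}(\boldsymbol{\theta})$.

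The main obstacle is bookkeeping rather than a deep technicality: one must carefully match the bottom-drop convention with the descent-set condition, keeping track of whether $w$ or $w^{-1}$ controls the monotonicity on packets. Either of two equivalent descriptions works; the forward riffle analysis above, or the ``inverse $\boldsymbol{\theta}$-shuffle'' (assign iid $\boldsymbol{\theta}$-distributed labels to cards and stably sort), in which the condition that the label sequence read along the sorted output is weakly increasing with strict rises at $\mathrm{Des}(w^{-1})$ makes the fundamental quasisymmetric generating function appear by direct inspection, after which $P_{\boldsymbol{\theta}}(w)=P^{\mathrm{inv}}_{\boldsymbol{\theta}}(w^{-1})$ completes the translation.
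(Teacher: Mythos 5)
The paper does not prove this statement at all --- it is quoted verbatim from \cite{Stanley2001} (Theorem 2.1), which in turn builds on \cite{Fulman1998} --- so there is no in-paper argument to compare against; you have supplied a proof where the authors only supply a citation. Your argument is correct and is essentially the standard derivation. The three ingredients all check out: (i) the telescoping computation showing that, conditional on the cut $\boldsymbol{N}$, the sequential proportional-size drop rule makes every interleaving equally likely with probability $\prod_i N_i!/n!$, so that the multinomial coefficient from the cut distribution cancels exactly; (ii) the observation that for a fixed cut there is exactly one interleaving producing a given compatible $w$, and that compatibility is the condition $\iDes(w)\subseteq D(\boldsymbol{N})$ (with the convention that $w(j)$ is the label in position $j$, so that preservation of each packet's relative order is monotonicity of $w^{-1}$ on the blocks $R_i$); and (iii) the regrouping of weak compositions by their underlying strict composition $\alpha$, which turns the sum into $\sum_{\alpha\ \mathrm{refines}\ \beta} M_\alpha(\boldsymbol{\theta})$ with $D(\beta)=\iDes(w)$, hence $Q_{\iDes(w)}(\boldsymbol{\theta})$ by \eqref{e:F2M}. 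You correctly flag the only delicate point, namely the $w$ versus $w^{-1}$ convention, and your choice is consistent with the statement and with the paper's later use of the result (e.g., $P_{\boldsymbol{\theta}}(\id)=Q_{\varnothing}=h_n$ and $P_{\boldsymbol{\theta}}(\rev)=Q_{[n-1]}=e_n$). The alternative route you sketch via inverse shuffles and stable sorting of i.i.d.\ labels is also the mechanism the paper itself exploits in Section~\ref{sec:stationary}, so either version would sit well alongside the text.
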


This identification together with \eqref{e:F2M} gives a useful
inequality which shows that separation and $\linf$ are achieved at the
reversal and the identity permutations, respectively.

\begin{proposition}
  For permutations $w$ and $u$, if $\iDes(w)$ contains $\iDes(u)$,
  then $\Prob(w) \leq \Prob(u)$ with equality if and only if $\iDes(w)
  = \iDes(u)$.
  \label{prop:refine}
\end{proposition}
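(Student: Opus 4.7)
The plan is to pass through Stanley's identification (Theorem~\ref{thm:stanley}). That theorem reduces the comparison of $\Prob(w)$ and $\Prob(u)$ to comparing $Q_{\iDes(w)}(\boldsymbol{\theta})$ and $Q_{\iDes(u)}(\boldsymbol{\theta})$, so the proposition becomes a monotonicity property of the fundamental basis evaluated at a nonnegative vector.

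The key combinatorial step is to translate descent-set containment into composition refinement. Under the bijection $\alpha \leftrightarrow D(\alpha)$ recalled in Section~\ref{sec:quasisymmetric}, adding an element to $D(\alpha)$ splits one part of $\alpha$ into two, so the hypothesis $\iDes(u) \subseteq \iDes(w)$ is equivalent to the composition $\beta(\iDes(w))$ refining $\beta(\iDes(u))$. By transitivity of refinement, every composition refining $\beta(\iDes(w))$ also refines $\beta(\iDes(u))$, and so \eqref{e:F2M} yields
\begin{equation*}
  Q_{\iDes(u)}(\boldsymbol{\theta}) - Q_{\iDes(w)}(\boldsymbol{\theta}) \;=\; \sum_{\alpha} M_{\alpha}(\boldsymbol{\theta}),
\end{equation*}
where $\alpha$ ranges over compositions refining $\beta(\iDes(u))$ but not $\beta(\iDes(w))$. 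Since $\boldsymbol{\theta}$ has nonnegative entries, each $M_{\alpha}(\boldsymbol{\theta}) \geq 0$ from the definition \eqref{e:monomial}, giving $\Prob(w) \leq \Prob(u)$.

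For the equality clause, if $\iDes(w) = \iDes(u)$ the two sides are equal as quasisymmetric functions and hence at any evaluation. Conversely, if $\iDes(u) \subsetneq \iDes(w)$, take the trivial refinement $\alpha = \beta(\iDes(u))$: it appears in the index set above because $\beta(\iDes(u))$ is strictly coarser than $\beta(\iDes(w))$ and therefore does not refine it. This contributes a strictly positive term whenever $\boldsymbol{\theta}$ has enough positive coordinates to realize the corresponding monomial. The only subtlety worth flagging is thus positivity of the single monomial $M_{\beta(\iDes(u))}(\boldsymbol{\theta})$; in the applications of interest---namely the convolution powers $\boldsymbol{\theta}^{*k}$ with $0 < \theta < 1$, whose coordinates are all strictly positive---this is automatic, so strict inequality holds.
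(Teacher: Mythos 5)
Your proof is correct and follows essentially the same route as the paper: translate containment of descent sets into refinement of compositions, expand via \eqref{e:F2M} and transitivity of refinement, and observe that the difference $Q_{\iDes(u)} - Q_{\iDes(w)}$ is a nonnegative sum of monomial quasisymmetric functions containing the term $M_{\beta(\iDes(u))}$ when the containment is strict. If anything you are slightly more careful than the paper, which asserts strict inequality from nonnegativity of the $x_i$ alone, whereas you correctly note that strictness requires $\boldsymbol{\theta}$ to have enough strictly positive coordinates to make $M_{\beta(\iDes(u))}(\boldsymbol{\theta})>0$ --- a condition satisfied in all the applications at hand.
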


\begin{proof}
  First note that $\alpha$ refines $\beta$ if and only if $D(\alpha)$
  contains $D(\beta)$. Let $\alpha$ and $\beta$ be such that
  $D(\alpha) = \iDes(w)$ and $D(\beta) = \iDes(u)$. From \eqref{e:F2M}
  and the transitivity of refinement, we have
  \begin{eqnarray*}
    Q_{D(\beta)}(X) & = & \sum_{\gamma \ \mathrm{refines} \ \beta}
    M_{\gamma}(X) \\
    & = & \sum_{\gamma \ \mathrm{refines} \ \alpha}
    M_{\gamma}(X) + \sum_{\substack{\gamma\prime \ \mathrm{refines} \
        \beta \\ \gamma\prime \ \mathrm{not} \ \mathrm{refine} \
        \alpha} }
    M_{\gamma\prime}(X) \\
    & = & Q_{D(\alpha)}(X) + \sum_{\substack{\gamma\prime \
        \mathrm{refines} \ \beta \\ \gamma\prime \ \mathrm{not} \
        \mathrm{refine} \ \alpha} } M_{\gamma\prime}(X).
  \end{eqnarray*}
  Furthermore, $\alpha \neq \beta$ if and only if $\beta$ does not
  refine $\alpha$, in which case the summand contains the term
  $M_{\beta}$. Since the $x_i$ are probabilities, they are all
  nonnegative, thus making $Q_{D(\alpha)}$ strictly less than
  $Q_{D(\beta)}$.
\end{proof}

In the partial order on subsets or, equivalently, composition, $[n-1]
= D(1^n)$ is the unique minimal element and $\varnothing = D(n)$ is
the unique maximal element. Therefore Proposition~\ref{prop:refine}
has the following consequence.

\begin{corollary}
  For any $\boldsymbol{\theta}$, we have
  \begin{eqnarray*}
    \SEP(P_{\boldsymbol{\theta}}) & = & 1 - n! \cdot
    Q_{[n-1]}(\boldsymbol{\theta}) \\
    \linf(P_{\boldsymbol{\theta}}) & = & \max(1 - n! \cdot
    Q_{[n-1]}(\boldsymbol{\theta}), n! \cdot Q_{\varnothing}(\boldsymbol{\theta}) - 1).
  \end{eqnarray*}
  \label{cor:metrics}
\end{corollary}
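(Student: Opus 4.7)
My plan is to derive the corollary as a direct consequence of Stanley's theorem (Theorem~\ref{thm:stanley}) and the monotonicity established in Proposition~\ref{prop:refine}, once one identifies which permutations realize the extremal values of $\iDes$.

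First I would unpack the definitions of $\SEP$ and $\linf$ using $U(w) = 1/n!$:
\begin{displaymath}
  \SEP(P_{\boldsymbol{\theta}}) = 1 - n! \cdot \min_{w \in \Sn} P_{\boldsymbol{\theta}}(w), \qquad
  \linf(P_{\boldsymbol{\theta}}) = \max\bigl(1 - n! \cdot \min_{w} P_{\boldsymbol{\theta}}(w),\ n! \cdot \max_{w} P_{\boldsymbol{\theta}}(w) - 1\bigr),
\end{displaymath}
so everything reduces to locating the minimizer and maximizer of $P_{\boldsymbol{\theta}}(w)$ over $\Sn$.

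Next, by Theorem~\ref{thm:stanley} the value $P_{\boldsymbol{\theta}}(w) = Q_{\iDes(w)}(\boldsymbol{\theta})$ depends on $w$ only through $\iDes(w) \subseteq [n-1]$. Using the bijection $\alpha \leftrightarrow D(\alpha)$, Proposition~\ref{prop:refine} says that $Q_{D}(\boldsymbol{\theta})$ is strictly monotone decreasing in $D$ under inclusion: if $D \supsetneq D'$, then $Q_{D}(\boldsymbol{\theta}) < Q_{D'}(\boldsymbol{\theta})$. The poset of subsets of $[n-1]$ (ordered by inclusion) has $[n-1] = D(1^n)$ as its unique maximum and $\varnothing = D(n)$ as its unique minimum, so $Q_{D}(\boldsymbol{\theta})$ is minimized at $D = [n-1]$ and maximized at $D = \varnothing$, provided both values are actually realized as $\iDes(w)$ for some $w \in \Sn$.

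Then I would verify the realization: $\iDes(w) = \mathrm{Des}(w^{-1}) = \varnothing$ iff $w^{-1} = \id$, i.e., $w = \id$, and $\iDes(w) = [n-1]$ iff $w^{-1}$ is strictly decreasing, i.e., $w^{-1}$ (hence $w$) is the reversal permutation. Plugging these into the formulas above yields
\begin{displaymath}
  \min_{w} P_{\boldsymbol{\theta}}(w) = Q_{[n-1]}(\boldsymbol{\theta}), \qquad
  \max_{w} P_{\boldsymbol{\theta}}(w) = Q_{\varnothing}(\boldsymbol{\theta}),
\end{displaymath}
which inserted into the displays above gives exactly the claimed expressions for $\SEP(P_{\boldsymbol{\theta}})$ and $\linf(P_{\boldsymbol{\theta}})$. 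There is no real obstacle here beyond the bookkeeping of the bijection between subsets and compositions; the substantive content was already done in Proposition~\ref{prop:refine}, and this corollary is just the identification of its extremal cases.
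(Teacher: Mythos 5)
Your proof is correct and follows the same route the paper takes: the corollary is read off from Theorem~\ref{thm:stanley} plus the monotonicity in Proposition~\ref{prop:refine}, using that $[n-1]$ and $\varnothing$ are the extremal descent sets, realized by the reversal and identity permutations. Your version just spells out the realization step that the paper leaves implicit.
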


\begin{remark}
  When $\boldsymbol{\theta} = (\theta, 1-\theta)^{*k}$, we show below
  that the maximum is taken on at the second argument. This is not
  always the case. On the cyclic group $\mathcal{C}_3$, with $\mu(1) =
  \mu(-1) = \frac{1}{2}, \mu(0) = 0$, we have $3\mu(1)-1 =
  \frac{1}{2}$ and $1-3\mu(0)=1$.
\end{remark}

For some permutations, the associated quasisymmetric functions are
easy to write down. This happens in particular if the quasisymmetric
function is symmetric. Below we need the elementary symmetric
functions $e_n(X)$, the complete homogeneous symmetric functions
$h_n(X)$, and the power sum symmetric functions $p_n(X)$. For $\lambda$
a partition with $n_i = n_i(\lambda)$ parts equal to $i$, define
$e_{\lambda} = \prod_i e_i^{n_i}$, $h_{\lambda} = \prod_i h_i^{n_i}$,
$p_{\lambda} = \prod_i p_i^{n_i}$. As $\lambda$ ranges over partitions
of $n$, these are the familiar bases for the homogeneous symmetric
functions of degree $n$.

Note that
\begin{equation}
  e_n(X) = Q_{[n-1]}(X), \hspace{2em}
  h_n(X) = Q_{\varnothing}(X), \hspace{2em}
  p_n(X^{*k}) = \left( p_n(X) \right)^k.
  \label{e:bases}
\end{equation}

\begin{theorem}
  For any $\boldsymbol{\theta}$, with $\id = 1,2,\ldots,n$ and $\rev =
  n,n-1,\ldots,1$, we have
  \begin{eqnarray}
    P_{\boldsymbol{\theta}}^{*k}(\rev) & = & \sum_{\lambda \vdash n}
    (-1)^{n-\ell(\lambda)} z_{\lambda}^{-1}
    \prod_{i=1}^{n} p_i(\boldsymbol{\theta})^{kn_i(\lambda)},
    \label{e:P_rev}\\
    P_{\boldsymbol{\theta}}^{*k}(\id) & = & \sum_{\lambda \vdash n}
    z_{\lambda}^{-1} \prod_{i=1}^{n}
    p_i(\boldsymbol{\theta})^{kn_i(\lambda)}, 
    \label{e:P_id}
  \end{eqnarray}
  where $\ell(\lambda)$ is the number of parts of $\lambda$ and
  $z_{\lambda} = \prod_i i^{n_i(\lambda)} n_i(\lambda)!$.
  \label{thm:SEP}
\end{theorem}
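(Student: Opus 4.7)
The plan is to reduce both identities to classical symmetric function expansions via Stanley's theorem and the convolution formula of Diaconis--Fill--Pitman.

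First I would pin down the two relevant inverse descent sets. Since $\rev$ is an involution, $\iDes(\rev) = \mathrm{Des}(\rev) = [n-1]$, and $\iDes(\id) = \varnothing$. Combining Theorem~\ref{thm:stanley} with Proposition~\ref{prop:convolve} gives
\begin{displaymath}
  P_{\boldsymbol{\theta}}^{*k}(\rev) = P_{\boldsymbol{\theta}^{*k}}(\rev) = Q_{[n-1]}(\boldsymbol{\theta}^{*k}), \qquad
  P_{\boldsymbol{\theta}}^{*k}(\id) = P_{\boldsymbol{\theta}^{*k}}(\id) = Q_{\varnothing}(\boldsymbol{\theta}^{*k}).
\end{displaymath}
By \eqref{e:bases}, $Q_{[n-1]} = e_n$ and $Q_{\varnothing} = h_n$, so the entire problem is to evaluate $e_n$ and $h_n$ at $\boldsymbol{\theta}^{*k}$.

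Next I would invoke the standard expansions of $h_n$ and $e_n$ in the power sum basis (as in Macdonald, Chapter I),
\begin{displaymath}
  h_n = \sum_{\lambda \vdash n} z_{\lambda}^{-1} p_{\lambda}, \qquad
  e_n = \sum_{\lambda \vdash n} (-1)^{n-\ell(\lambda)} z_{\lambda}^{-1} p_{\lambda},
\end{displaymath}
with $z_{\lambda} = \prod_i i^{n_i(\lambda)} n_i(\lambda)!$ and $p_{\lambda} = \prod_i p_i^{n_i(\lambda)}$. These can be derived from the generating functions $\sum_n h_n t^n = \exp\bigl(\sum_{i\geq 1} p_i t^i/i\bigr)$ and $\sum_n e_n t^n = \exp\bigl(\sum_{i\geq 1} (-1)^{i-1} p_i t^i/i\bigr)$ by expanding the exponential and collecting terms indexed by the partition recording the multiplicities.

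Finally, the third identity of \eqref{e:bases}, $p_i(\boldsymbol{\theta}^{*k}) = p_i(\boldsymbol{\theta})^k$, gives
\begin{displaymath}
  p_{\lambda}(\boldsymbol{\theta}^{*k}) = \prod_i p_i(\boldsymbol{\theta})^{kn_i(\lambda)},
\end{displaymath}
and substituting this into the two power-sum expansions yields \eqref{e:P_rev} and \eqref{e:P_id} respectively. There is no genuine obstacle here; the only point that deserves a line of justification is the multiplicativity of $p_i$ under the $*$-operation on alphabets, which follows immediately from the definition $\boldsymbol{\theta} * \boldsymbol{\eta} = (\theta_a \eta_b)_{a,b}$ since $p_i$ is a sum of $i$th powers of the letters.
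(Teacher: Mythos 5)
Your proof is correct and follows the same route as the paper: Stanley's theorem plus the convolution property reduce the problem to evaluating $e_n = Q_{[n-1]}$ and $h_n = Q_{\varnothing}$ at $\boldsymbol{\theta}^{*k}$, and the classical power-sum expansions together with $p_i(\boldsymbol{\theta}^{*k}) = p_i(\boldsymbol{\theta})^k$ finish the job. You simply fill in the details (the descent-set computations and the multiplicativity of $p_i$ under $*$) that the paper leaves implicit.
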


\begin{proof}
  The result follows from Theorem~\ref{thm:stanley}, \eqref{e:bases}
  and the standard expansions \cite{Macdonald1995}
  \begin{equation}
    e_n = \sum_{\lambda} \epsilon_{\lambda} z_{\lambda}^{-1}
    p_{\lambda} \hspace{2em} \mbox{and} \hspace{2em}
    h_n = \sum_{\lambda} z_{\lambda}^{-1} p_{\lambda}.
    \label{e:power}
  \end{equation}
\end{proof}

\begin{remark}
  For both \eqref{e:P_rev} and \eqref{e:P_id} in the theorem, when
  $\lambda = (1^n)$, $z_{\lambda}^{-1} = 1/n!$ and $\prod_i
  p_i(\boldsymbol{\theta})^{k n_i(\lambda)} = 1$. Thus the lead term is
  $1/n!$ and all other terms are strictly less than $1$. As $k$ tends
  to $\infty$, these terms tend to $0$ and
  $P_{\boldsymbol{\theta}}^{*k}(\id) \sim P_{\boldsymbol{\theta}}^{*k}(\rev)
  \sim \frac{1}{n!}$. Of course, our work is to quantify this
  convergence.
\end{remark}

\begin{corollary}
  For any $\boldsymbol{\theta}$ and all $k \geq 0$, we have
  \begin{eqnarray*}
    \SEP(P_{\boldsymbol{\theta}}^{*k}) & = & 1 - n!
    P_{\boldsymbol{\theta}}^{*k}(\rev), \\
    \linf(P_{\boldsymbol{\theta}}^{*k}) & = &
    n!P_{\boldsymbol{\theta}}^{*k}(\id) - 1.
  \end{eqnarray*}
  \label{cor:extremes}
\end{corollary}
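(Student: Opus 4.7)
The plan is to combine three results already established in the excerpt. Proposition \ref{prop:convolve} gives $P_{\boldsymbol{\theta}}^{*k} = P_{\boldsymbol{\theta}^{*k}}$, reducing every statement about $k$-fold convolutions to a single-shuffle statement with parameter $\boldsymbol{\theta}^{*k}$. Next, $\iDes(\id)=\varnothing$ and $\iDes(\rev)=[n-1]$, since the reversal is an involution whose descent set equals $[n-1]$, so Theorem \ref{thm:stanley} identifies $P_{\boldsymbol{\theta}^{*k}}(\id) = Q_{\varnothing}(\boldsymbol{\theta}^{*k})$ and $P_{\boldsymbol{\theta}^{*k}}(\rev) = Q_{[n-1]}(\boldsymbol{\theta}^{*k})$. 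Substituting these into Corollary \ref{cor:metrics}, applied with parameter $\boldsymbol{\theta}^{*k}$, immediately produces the separation identity $\SEP(P_{\boldsymbol{\theta}}^{*k}) = 1 - n!\,P_{\boldsymbol{\theta}}^{*k}(\rev)$ and reduces the $\linf$ claim to showing that the maximum defining $\linf$ is always attained at the identity side.

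The one remaining step is the inequality $n!\,P_{\boldsymbol{\theta}}^{*k}(\id) - 1 \geq 1 - n!\,P_{\boldsymbol{\theta}}^{*k}(\rev)$, equivalently $P_{\boldsymbol{\theta}}^{*k}(\id) + P_{\boldsymbol{\theta}}^{*k}(\rev) \geq 2/n!$. I would read this off directly from Theorem \ref{thm:SEP}: summing the two power-sum expansions collapses to
\begin{displaymath}
\sum_{\lambda \vdash n} \bigl(1 + (-1)^{n - \ell(\lambda)}\bigr)\, z_\lambda^{-1} \prod_{i=1}^{n} p_i(\boldsymbol{\theta})^{k n_i(\lambda)},
\end{displaymath}
in which the prefactor is either $0$ or $2$, the power sums $p_i(\boldsymbol{\theta}) = \sum_j \theta_j^{i}$ are nonnegative, and $\lambda = (1^n)$ alone contributes $2/n!$. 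So the whole sum is at least $2/n!$, yielding the required inequality and completing the proof.

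I do not anticipate any real obstacle: the argument is a bookkeeping assembly of Proposition \ref{prop:convolve}, Theorem \ref{thm:stanley}, Corollary \ref{cor:metrics}, and Theorem \ref{thm:SEP}, with the only substantive point being the positivity check that selects the correct branch of the $\linf$ maximum.
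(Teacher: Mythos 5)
Your proposal is correct and follows essentially the same route as the paper: both rest on Corollary~\ref{cor:metrics} to locate the extremes at $\id$ and $\rev$, and on the power-sum expansions of Theorem~\ref{thm:SEP} to see that the identity branch of the $\linf$ maximum dominates. Your version of the final positivity check (summing the two expansions so the coefficients become $0$ or $2$ and the $\lambda=(1^n)$ term yields $2/n!$) is just a repackaging of the paper's term-by-term comparison after cancelling the $1$'s.
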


\begin{proof}
  The first equality follows from the definition. For the second
  inequality,
  \begin{displaymath}
    \linf(P_{\boldsymbol{\theta}}^{*k}) =
    \max(1-n!P_{\boldsymbol{\theta}}^{*k}(\rev),
    n!P_{\boldsymbol{\theta}}^{*k}(\id)-1).
  \end{displaymath}
  In comparing terms, the $1$ cancels in both, and the second term is
  a sum of positive terms while the first has the same terms, some
  with negative signs.
\end{proof}

Specializing to $\theta$-biased shuffles, Corollary~\ref{cor:extremes}
and Theorem~\ref{thm:SEP} imply \eqref{e:sep_closed} and
\eqref{e:linf_closed}.

\section{Strong Stationary Times}
\label{sec:stationary}

Repeated shuffling from any of the measures in
Section~\ref{sec:shuffling} forms a Markov chain $id = W_0, W_1, W_2,
\ldots$ taking values in $\Sn$. A \emph{strong stationary time} (SST)
$T$ is a stopping time (meaning $P\{T > k\}$ only depends on $W_0,
W_1, \ldots, W_k$) such that for all $k \geq 0$ and all $w \in \Sn$,
\begin{equation}
  P \{ W_k = w \ | \ T \leq k\} = U(w).
  \label{e:SST}
\end{equation}
We will build an SST for the Markov chain induced by
$P_{\boldsymbol{\theta}}^{*k}$. A basic proposition of this theory
\cite{LPW2009}[Lemma 6.1] is
\begin{equation}
  \SEP(k) \leq P \{ T > k\} 
  \hspace{1em}
  \mbox{for all $k \geq 0$.}
  \label{e:SEP_SST}
\end{equation}
Further, \cite{AldousDiaconis1987} shows that there always exists a
fastest SST $T^*$ satisfying \eqref{e:SEP_SST} with equality for all
$k$.

Background on stationary times is in \cite{DF1990}. In this section,
we build a fastest SST (following \cite{AldousDiaconis1987} and
\cite{Fulman1998}) involving a birthday problem to bound the right
hand side of \eqref{e:SEP_SST}. Solving this birthday problem by
inclusion-exclusion gives a probabilistic proof of
\eqref{e:sep_closed}, Theorem~\ref{thm:SEP} and even the expression
for the elementary symmetric function $e_n$ in terms of the power sums
\eqref{e:power}.

\subsection*{Constructing an SST for $P_{\boldsymbol{\theta}}^{*k}$}

Consider the inverse process in which cards are labeled $i$ with
probability $\theta_i$ independently. Then all the cards labeled $1$
are removed, keeping them in their same relative order, followed by
all cards labeled $2$, and so on. This is one inverse
$\boldsymbol{\theta}$-shuffle. Repetitions may be realized by labeling
each card with a vector with coordinates chosen independently from
$\boldsymbol{\theta}$. The first shuffle is read off the first
coordinate of each card, the second shuffle off the second coordinate,
and so on. Conceptually, each card may be labeled with a vector of
infinite length.

Consider the first time $T$ that the first $T$ coordinates of the $n$
cards are distinct. Repeated inverse shuffling sorts the vectors
lexicographically, leaving the card with the smallest vector on top,
the next smallest second, and so on. By symmetry, at time $T$, the
deck is uniformly distributed, even conditional on $T=k$. This is
\eqref{e:SST}. Further, this $T$ is fastest. To see this, note that
the reversal permutation $\rev = n,n-1,\ldots,1$ is a \emph{halting
  state}: $P\{T \leq k\} \leq P\{W_T = \rev\}$. Indeed, if $W_T =
\rev$, then every pair of cards must have a distinct label. Existence
of a halting state implies that $T$ is fastest (\cite{DF1990}[Remark
2.39] and \cite{LPW2009}[Remark 6.12]), separation is achieved at
$\rev$, and
\begin{equation}
  \SEP(k) = P\{T > k\}
  \hspace{1em}
  \mbox{for all $k \geq 0$.}
  \label{e:SEP_SST2}
\end{equation}
To work with the right hand side of \eqref{e:SEP_SST2}, let $A_{i,j}$
be the event that the first $k$ coordinates of the labels on the cards
$i$ and $j$ are equal. Thus $P\{A_{i,j}\} = \left(\sum_a \theta_a^2
\right)^{k}$ and
\begin{equation}
  \{ T > k\} = \bigcup_{1 \leq i < j \leq n} \{A_{i,j}\}.
  \label{e:Aij}
\end{equation}
Bounding the probability of the union by the sum of the probabilities
yields
\begin{equation}
  \SEP(k) \leq \binom{n}{2} \left(\sum_a \theta_a^2\right)^{k}.
\end{equation}
This bound is also derived in \cite{Fulman1998}. The asymptotics of
Section~\ref{sec:main} show it is quite accurate.

\subsection*{Inclusion-Exclusion and the Birthday Problem}

Consider this version of the birthday problem: $n$ balls are dropped
independently into $B$ boxes with the chance of box $i$ being
$\eta_i$. If $B_{i,j}$ is the event that balls $i$ and $j$ both wind
up in the same box, the chance of success (having two or more balls in
the same box) is
\begin{equation}
  P(\mathrm{success}) = P \left( \bigcup_{1 \leq i < j \leq n} B_{i,j}
  \right).
  \label{e:union}
\end{equation}
Elementary considerations show that the chance of failure (all balls
in distinct boxes) is expressible using elementary symmetric functions
$e_n$ as $1-P(\mathrm{success}) = n!e_n(\eta_1,\ldots,\eta_B)$. Using
the expression for $e_n$ in terms of the power sums \eqref{e:power}
gives
\begin{equation}
  P(\mathrm{success}) = 1 - \sum_{w \in \Sn} \sgn(w)
  p_{\lambda(w)}(\boldsymbol{\eta}) = 1 - n! \sum_{\lambda
    \vdash n} (-1)^{n-\ell(\lambda)} z_{\lambda}^{-1}
  p_{\lambda}(\boldsymbol{\eta}) 
  \label{e:success}
\end{equation}

The inclusion-exclusion expansion of \eqref{e:union} gives a sum of
polynomials which must match the neat expressions in
\eqref{e:success}. This may be seen explicitly using the
inclusion-exclusion formula for the chromatic polynomial in
\cite{Stanley1995}. For example,
\begin{displaymath}
  P\{B_{1,2} \cup B_{1,3} \cup B_{2,3}\} = 3P(B_{1,2}) -
  3P(B_{1,2} \cap B_{2,3}) + P(B_{1,2} \cap B_{1,3} \cap B_{2,3}) =
  3(\sum p_j^2) - 2(\sum p_j^3),
\end{displaymath}
while \eqref{e:success} gives $6( -\frac{1}{2}
p_{(2,1)}(\boldsymbol{\eta}) + \frac{1}{3} p_3(\boldsymbol{\eta}) )$
matching \eqref{e:union}.

\begin{remark}
  Since separation is achieved (uniquely) at the reversal permutation,
  \eqref{e:SEP_SST2}, \eqref{e:Aij}, \eqref{e:union},
  \eqref{e:success} give a probabilistic proof of
  Theorem~\ref{thm:SEP}.
\end{remark}

\begin{remark}
  This connection between inclusion--exclusion, birthday problems and
  symmetric functions seems generally useful. See, for example,
  \cite{MS2004}[pg. 604--605].
\end{remark}

\section{Main Result}
\label{sec:main}

This section derives the asymptotic results of Theorem~\ref{thm:main}
and some extensions. Without loss of generality, suppose $1/2 \le
\theta \le 1$. To bound the $\linf$ distance, using
Corollary~\ref{cor:extremes} together with \eqref{e:linf_closed} and
\eqref{e:sep_closed}, we are interested in
\begin{equation}
  \ell(k, n) =\sum_{w \in S_n} \prod_{j} \theta_j^{kn_j(w)}, 
\end{equation}
where $\theta_j = \theta^j + (1-\theta)^j$ and $n_j(w)$ denotes
the number of $j$ cycles in the permutation $w$.  If
$$ 
f_n(x_1,\ldots, x_n)  = \sum_{w\in S_n} \prod_{j} x_j^{n_j(w)} 
$$ 
then we have the identity 
\begin{equation}
  \sum_{n=0}^{\infty}\frac{ z^n}{n!}  f_n(x_1,\ldots,x_n)   
  = \exp\Big( \sum_{j=1}^{\infty} \frac{z^j}{j} x_j \Big). 
\end{equation}
Therefore we have that
\begin{equation}
  \sum_{n=0}^{\infty} \frac{z^n}{n!} \ell(k,n) = \exp\left(
    \sum_{j=1}^{\infty} \frac{z^j }{j} \theta_j^k \right).
\end{equation}

\begin{theorem}
  Define
  $$ 
  M= M(k,n) = \sum_{j=2}^{\infty} n^j \theta_j^k. 
  $$ 
  If $M \le \sqrt{n}/(10\log n)$, then we have 
  $$ 
  { \ell(k,n)}= \exp\left( \sum_{j=2}^{\infty} \frac{n^j}{j} \theta_j^k 
  \right) \left( 1+ O\left( \frac{1+M}{\sqrt{n}}\right) \right).
  $$ 
  \label{thm:sound}
\end{theorem}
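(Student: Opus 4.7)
The plan is to extract $\ell(k,n)/n! = [z^n] F(z)$ via the saddle point method applied to
\[ F(z) = \exp\Big(\sum_{j \ge 1} \frac{z^j}{j} \theta_j^k\Big) = e^z \exp(H(z)), \qquad H(z) = \sum_{j \ge 2} \frac{z^j}{j} \theta_j^k. \]
The saddle $r^*$ on the positive real axis solves $r^*(1 + H'(r^*)) = n$, and since $nH'(n) = \sum_{j \ge 2} n^j \theta_j^k = M$ is small by hypothesis, I expect $r^* = n - M + O(M^2/n)$, a mild perturbation of $n$ (and still within the disk of convergence of $H$, which has radius $1/\theta^k > n$ since $M<\infty$ forces $n\theta^k<1$).

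On the contour $|z| = r^*$ parametrized by $z = r^* e^{i\phi}$, I would split at $|\phi| = \phi_0 \sim n^{-1/2}\log n$. On the central arc, Taylor expanding $L(z) := \log F(z) - n\log z$ around $\phi = 0$ has vanishing linear term (by the saddle equation), and the quadratic term produces a Gaussian $\exp(-\sigma^2\phi^2/2)$ with $\sigma^2 = (r^*)^2 H''(r^*) + n$. On the peripheral arc, the factor $|e^{r^* e^{i\phi}}| = e^{r^*\cos\phi}$ decays rapidly while $|\exp(H(r^* e^{i\phi}))| \le e^{M}$ is bounded (using $|H(z)| \le M$ on $|z|=r^*$), so that contribution is negligible. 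Performing the Gaussian integral gives the leading term $e^{L(r^*)}/\sqrt{2\pi\sigma^2}$.

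Evaluating $L(r^*) = r^* + H(r^*) - n \log r^*$ by Taylor expansions (using $H(r^*) - H(n) = -M^2/n + O(M^3/n^2)$ since $H'(n) = M/n$, and $n\log r^* = n\log n - M + O(M^2/n)$) and combining with Stirling's formula yields
\[ \ell(k,n) = E \cdot \bigl(1 + O((M+M^2)/n)\bigr) = E \cdot \bigl(1 + O((1+M)/\sqrt{n})\bigr), \]
where $E = \exp(H(n))$, and the final simplification uses $M \le \sqrt{n}/(10\log n) \le \sqrt{n}$ to bound $(M + M^2)/n \le (1+M)/\sqrt{n}$.

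The main technical step is establishing $\sigma^2 = n(1 + O(M/n))$, i.e.\ $(r^*)^2 H''(r^*) = \sum_{j \ge 2} (j-1)(r^*)^j \theta_j^k = O(M)$. I would exploit the subadditivity $\theta_{j_1 + j_2} \le \theta_{j_1} \theta_{j_2}$, which is immediate from expanding $(\theta^{j_1} + (1-\theta)^{j_1})(\theta^{j_2}+(1-\theta)^{j_2})$ and dropping the two nonnegative cross terms. This implies the sequence $a_j := n^j \theta_j^k$ satisfies $a_{j_1 + j_2} \le a_{j_1} a_{j_2}$, forcing geometric decay of $(a_j)_{j\ge 2}$ as soon as $a_2\le M$ is bounded away from $1$; combined with the fact that finiteness of $M$ forces $n\max(\theta,1-\theta)^k < 1$ (so that the one-sided ratio bound $a_{j+1}\le n\max(\theta,1-\theta)^k\cdot a_j$ gives usable decay even when $a_2$ is only bounded away from $1$ logarithmically), one deduces $\sum_{j\ge2} ja_j = O(M)$, and hence the $(1+M)/\sqrt{n}$ rate.
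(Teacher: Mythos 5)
Your overall strategy (circle method / saddle point for $[z^n]\exp(F_k(z))$) is the same as the paper's, but you move the contour to the exact saddle $r^*$ and track the variance correction $H''$, whereas the paper simply integrates over $|z|=n$ and absorbs all of $H$'s angular variation into the single estimate $\sum_{j\ge 2}\tfrac{n^j}{j}\theta_j^k|e^{ijx}-1|\le |x|\sum_{j\ge2}n^j\theta_j^k=|x|M$, where the factor $j$ from $|e^{ijx}-1|\le j|x|$ exactly cancels the $1/j$. That choice is what lets the paper get away with controlling only $M$ itself. Your refinement introduces two genuine gaps.

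First, your key technical claim $(r^*)^2H''(r^*)=\sum_{j\ge2}(j-1)(r^*)^j\theta_j^k=O(M)$ is false under the theorem's hypotheses. The decay ratio $\rho=n\max(\theta,1-\theta)^k$ need only satisfy $1-\rho\gtrsim 1/M$ (from $M\ge\sum_{j\ge2}\rho^j$), so "geometric decay" gives only $\sum_{j\ge2}ja_j\lesssim a_2/(1-\rho)^2\lesssim M^3$, not $O(M)$. Concretely, for $\theta=1-1/n$ and $k$ chosen so that $n\theta^k=e^{-c}$ with $c=1/\log n$, one has $a_j\sim e^{-jc}$, so $M\asymp 1/c=\log n$ (well within the hypothesis) while $\sum_{j\ge2}ja_j\asymp 1/c^2=(\log n)^2\asymp M^2$. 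The correct bound, obtainable from the submultiplicativity you note via $\sum_{i,j\ge2}a_ia_j\ge\sum_{m\ge4}(m-3)a_m$, is $\sum_{j\ge2}ja_j=O(M+M^2)$; since $M\le\sqrt{n}$ this still yields $\sigma^2=n+O(M^2)$ and a relative error $O(M^2/n)=O((1+M)/\sqrt{n})$, so your route is repairable, but the step as written is wrong and the analogous control of the cubic term $L'''$ (needing $\sum j^2a_j$) is not addressed at all. Second, on the peripheral arc your bound $|\exp(H(z))|\le e^{M}$ is not "bounded": $M$ may be as large as $\sqrt{n}/(10\log n)$, which overwhelms the Gaussian saving $e^{-c(\log n)^2}$ at $|\phi|=\phi_0$. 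You need the termwise inequality $\mathrm{Re}\,H(r^*e^{i\phi})\le H(r^*)$ (each term $\tfrac{(r^*)^j}{j}\theta_j^k\cos(j\phi)$ is at most its value at $\phi=0$), which is exactly how the paper bounds $\mathrm{Re}\,F_k(ne^{ix})\le F_k(n)-n(1-\cos x)$ so that the tail is small relative to the main term rather than in absolute terms.
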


\begin{proof}
  Set $F_k(z) = \sum_{j=1}^{\infty} \frac{z^j }{j} \theta_j^k$.  By
  the residue theorem we have
  $$ 
  \ell(k,n) = \frac{n!}{2\pi i} \int_{|z|=n} \exp(F_k(z)) z^{-n} \frac{dz}{z} 
  = \frac{n!}{2\pi n^n} \int_{-\pi}^{\pi} \exp(F_k(ne^{ix}) - inx) dx. 
  $$ 
  We divide the integral into the ranges when $|x|\le (\log n)/\sqrt{n}$ 
  which gives the main contribution, and $\pi \ge |x|> (\log n)/\sqrt{n}$.  

  Consider first the range $|x|\le (\log n)/\sqrt{n}$.  Here we have
  $$ 
  F_k(n e^{ix}) = ne^{ix} + \sum_{j=2}^{\infty} \frac{n^j}{j} \theta_j^k
  e^{ijx} = ne^{ix} + \sum_{j=2}^{\infty} \frac{n^j}{j} \theta_j^k +
  O(|x| M),
  $$ 
  since $e^{ijx} = 1+ O(j|x|)$.  Therefore, using $ne^{ix} = n +inx -n
  x^2/2 + O(|x|^3n)$ and Stirling's formula, the integral over this
  region is
  \begin{displaymath}
    \frac{n!}{2\pi n^n} \int_{|x|\le (\log n)/\sqrt{n}} \exp\Big( n -
    \frac{nx^2}{2} +\sum_{j=2}^{\infty} \frac{n^j}{j} \theta_j^k +
    O\Big(|x|^3 n+ |x| M\Big)\Big) dx 
  \end{displaymath}
  which reduces to
  \begin{equation}
    \exp\Big(\sum_{j=2}^{\infty}
    \frac{n^j}{j} \theta_j^k\Big) \Big( 1 +
    O\Big(\frac{1+M}{\sqrt{n}}\Big)\Big).
  \end{equation}

  Now consider the range $\pi \ge |x|> (\log n)/\sqrt{n}$.  Here we
  have
  $$ 
  Re (F_k(ne^{ix})) \le  F_k(n)  - n (1-\cos(x)) \le F_k(n) - c(\log n)^2, 
  $$ 
  for some positive constant $c$. Using Stirling's formula again, the
  contribution of this segment of the integral is therefore
  $$ 
  \ll \frac{n!}{n^n} \exp(F_k(n) - c(\log n)^2) \ll \sqrt{n}
  \exp\Big(\sum_{j=2}^{\infty} \frac{n^j}{j} \theta_j^k - c(\log
  n)^2\Big),
  $$ 
  which may be absorbed into our error term.  
\end{proof}

From this Theorem we can read off the behavior of the $\ell^{\infty}$
distance after $k$ biased shuffles. First consider the case when
$(1-\theta)\log n$ is large. In this range put
\begin{equation}
  k = \Big \lfloor\frac{2\log n -\log 2 + c}{-\log (\theta^2
    +(1-\theta)^2)} \Big\rfloor.
\end{equation}
We find that the contribution to $M(k,n)$ arises mainly from $j=2$ and
so $M(k,n) \ll e^{-c}$, and we have
\begin{equation} 
  \ell(k,n) \sim \exp(e^{-c}),
\end{equation}
so that the $\ell^{\infty}$ distance behaves like $\exp(e^{-c})-1$,
and similarly the separation distance behaves like $1- \exp(-e^{-c})$,
in agreement with \cite{DFP1992}.

Next consider the case when $(1-\theta) \log n = \kappa \in
[0,\infty)$. Keep the notation above for $k$, here we find that $n^2
\theta_2^k = 2 e^{-c}$, as before, and for $j\ge 3$,
\begin{equation}
  n^j \theta_j^k \sim \exp( \frac j2 (-\kappa + \log 2 -c)\Big).
\end{equation}
Therefore, if $c> \log 2 - \kappa$, then $M(k,n)$ is small, and
Theorem~\ref{thm:sound} applies.  Moreover in this case we have
\begin{equation}
  \ell (k,n) \sim \exp\Big( e^{-c} + \sum_{j=3}^{\infty} \frac{1}{j}
  \exp(\frac j2 (-\kappa+\log 2 -c))\Big).
\end{equation}

Finally, consider the extreme case $\theta= 1-1/n$.  It is convenient
here to define $k = n\log n +cn$.  Then $n^j \theta_j^k \sim e^{-jc}$
for $j\ge 2$, and $M(k,n)$ is small provided $c>0$.  In that case we
have
\begin{equation}
  \ell(k,n) \sim \exp\Big(\sum_{j=2}^{\infty} \frac{e^{-jc}}{j} \Big) =
  \frac{e^{-e^{-c}}}{1-e^{-c}}.
\end{equation}
Compare with Theorem 1.1 of \cite{DFP1992}.

\section*{Acknowledgements}
The authors thank Amy Pang for helpful conversations
about the hyperplane perspective, and Jason Fulman for careful
comments and references.

\bibliographystyle{alpha}
\bibliography{biased}

\end{document}